\documentclass[11pt]{amsart}
\usepackage{amssymb}
\usepackage{graphicx} 
\usepackage{enumerate}
\usepackage{mathtools}
\usepackage{color,soul}
\usepackage{cite}
\usepackage{tikz}
\usetikzlibrary{matrix}

\usepackage{amsmath, amsthm}
 \usepackage{relsize}

\usepackage{color}

\definecolor{lgray}{gray}{0.82}
\definecolor{rredd}{rgb}{120,0,0}

\usepackage[margin=1.2in]{geometry}

\newtheorem{thm}{Theorem}[section]
\newtheorem{prop}[thm]{Proposition}
\newtheorem{lem}[thm]{Lemma}
\newtheorem{cor}[thm]{Corollary}
\newtheorem{defn}[thm]{Definition}

\newtheorem{ex}[thm]{Example}

\newcommand{\theoremname}{Theorem:}

\newcommand{\R}{\mathbb{R}}
\newcommand{\Z}{\mathbb{Z}}

\theoremstyle{definition}

\theoremstyle{remark}

\numberwithin{equation}{section}

\title{ Classification of the Discrete Real Specializations of the Burau representation of $B_3$}
\author{Nancy Scherich}

\begin{document}

\begin{abstract}
This classification is found by analyzing the action of a normal subgroup of $B_3$ as hyperbolic isometries. This paper gives an example of an unfaithful specialization of the Burau representation on $B_4$ that is faithful when restricted to $B_3$, as well as examples of unfaithful specializations of $B_3$.
\end{abstract}
\maketitle
\section{Introduction}

Representations of the braid groups have attracted attention because of their wide variety of applications from discrete geometry to  quantum computing.  One well studied representation is the Burau representation. Thanks to the work of Moody \cite{MOODY}, Long and Paton \cite{LONGPATON}, and Bigelow \cite{BIG}, the Burau representation is famously known for its longstanding open question of faithfulness for $n=4$. This paper takes the point of view that one should also ask other structural questions about the image of a braid group representation, in particular whether the image is discrete for specializations of the parameter.  Venkataramana in \cite{VEN} also followed this pursuit for discrete specializations of the Burau representation but with a different approach toward arithmeticity.

The Burau representation is one summand of the Jones representations, which are used in modeling quantum computations. So, much work has been done to understand specializations to roots of unity, as explored by Funar and Kohno in \cite{FK2014}, Venkataramana in \cite{VEN}, Freedman, Larson and Wang in \cite{FLW}, and many others. However, there seems to be a lack of exploration of the real specializations of the Burau representation, which is the topic of this paper.  The main theorem we will prove is a complete classification of the discrete real specializations of the Burau representation for $n=3$ as well as some faithfulness results.




\begin{thm}\label{combind}
The real discrete specializations of the Burau representation  of $B_3$ are exactly when $t$ satisfies one of the following:
\begin{enumerate}
\item $t<0$ and $t\neq -1$
\item $0<t\leq\frac{3-\sqrt{5}}{2}$ or $t\geq\frac{3+\sqrt{5}}{2}$
\item $\frac{3-\sqrt{5}}{2} <t<\frac{3+\sqrt{5}}{2}$ and the image forms a triangle group.
\end{enumerate}
Additionally, the specialization is faithful in $(1)$ and $(2)$.
\end{thm}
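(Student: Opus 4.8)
The plan is to projectivize and read the whole question off a two-generator subgroup acting on $\mathbb{H}^2$. Write $\beta_t\colon B_3\to GL_2(\mathbb{R})$ for the reduced Burau representation (the unreduced representation is equivalent for both discreteness and faithfulness). Then $\det\beta_t(\sigma_i)=-t$ and a direct computation gives $\beta_t\!\left((\sigma_1\sigma_2)^3\right)=t^3 I$, so $\beta_t$ descends to a homomorphism $\rho_t\colon PSL_2(\mathbb{Z})\cong B_3/Z(B_3)\to PGL_2(\mathbb{R})$ sending $\sigma_1\sigma_2$ to an elliptic of order $3$ and $\sigma_1\sigma_2\sigma_1$ to an element of order $2$. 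Since $\beta_t(w)$ scalar forces $|\det\beta_t(w)|=|t|^{\epsilon(w)}$, the group $\beta_t(B_3)\cap\mathbb{R}^*I$ is discrete for every real $t\neq 0$; hence $\beta_t(B_3)$ is discrete in $GL_2(\mathbb{R})$ if and only if $\rho_t(PSL_2(\mathbb{Z}))$ is discrete in $PGL_2(\mathbb{R})$, and because $\beta_t(c^k)=t^{3k}I$ for the central element $c=(\sigma_1\sigma_2)^3$, the map $\beta_t$ is faithful on $B_3$ exactly when $\rho_t$ is faithful on $PSL_2(\mathbb{Z})$ and $t\neq\pm1$. (The normal subgroup $[B_3,B_3]\cong F_2$, on which $\beta_t$ lands in $SL_2(\mathbb{R})$, supplies the hyperbolic isometries of the abstract.)

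Next I would locate the generators in the upper half-plane. Solving fixed-point equations, $\rho_t(\sigma_1\sigma_2)$ is rotation by $2\pi/3$ about $q=\tfrac{1+i\sqrt3}{2}$, independently of $t$; and $P_t:=\rho_t(\sigma_1\sigma_2\sigma_1)$ is the half-turn about $p_t=i/\sqrt{-t}$ when $t<0$ (so $\rho_t$ lands in $PSL_2(\mathbb{R})$) and is the reflection in the geodesic $\{|z|=1/\sqrt t\}$ when $t>0$ (orientation-reversing). The discreteness of the image is governed by a single separation, and a short hyperbolic-trigonometry computation gives
\[
\cosh d(p_t,q)=\frac{\sqrt{-t}+1/\sqrt{-t}}{\sqrt3}\ \ (t<0),\qquad \cosh d\!\left(q,P_t q\right)=\frac{2t+2/t-1}{3}\ \ (t>0),
\]
(the case $t>0$ being reduced to the orientation-preserving index-two subgroup $\langle\rho_t(\sigma_1\sigma_2),\,P_t\rho_t(\sigma_1\sigma_2)P_t\rangle$, generated by the two order-$3$ rotations about $q$ and $P_tq$). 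These are compared against the critical separations $\operatorname{arccosh}(2/\sqrt3)$ and $\operatorname{arccosh}(5/3)$, at which a half-turn together with an order-$3$ rotation (resp.\ two order-$3$ rotations) first generate a discrete free product $\mathbb{Z}/2*\mathbb{Z}/3$ (resp.\ $\mathbb{Z}/3*\mathbb{Z}/3$).

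Then the trichotomy falls out. For $t<0$ the first quantity is $\geq 2/\sqrt3$, with equality only at $t=-1$, where $\rho_{-1}$ is the standard embedding $PSL_2(\mathbb{Z})\hookrightarrow PSL_2(\mathbb{R})$: discrete, but non-faithful on $B_3$. For $t<0$, $t\neq-1$ the separation is strictly larger, so the Ping-Pong Lemma identifies $\rho_t(PSL_2(\mathbb{Z}))$ with a discrete copy of $\mathbb{Z}/2*\mathbb{Z}/3$ and makes $\rho_t$ injective; with $t\neq\pm1$ this gives (1). For $t>0$ the ping-pong condition $\cosh d(q,P_tq)\geq 5/3$ is equivalent to $t+1/t\geq 3$, i.e.\ $t\leq\tfrac{3-\sqrt5}{2}$ or $t\geq\tfrac{3+\sqrt5}{2}$; there the orientation-preserving subgroup is $\mathbb{Z}/3*\mathbb{Z}/3$, so $\rho_t(PSL_2(\mathbb{Z}))$ is discrete, and since the unique index-two subgroup of $PSL_2(\mathbb{Z})$ maps isomorphically and $PSL_2(\mathbb{Z})$ has no nontrivial finite normal subgroup, $\rho_t$ is faithful; with $t\neq\pm1$ this gives (2). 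Finally, for $\tfrac{3-\sqrt5}{2}<t<\tfrac{3+\sqrt5}{2}$ the separation lies strictly below the free-product threshold, so any discrete $\rho_t(PSL_2(\mathbb{Z}))$ must acquire an extra relation between its order-$2$ and order-$3$ generators; invoking the classification of two-generator discrete subgroups of $PSL_2(\mathbb{R})$ generated by elliptics forces such a group to be a triangle group, and conversely one reads off the countably many parameters (those with $t+1/t=2\cos(\pi/n)+1$, plus $t=1$) that realize it — this is (3).

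The step I expect to be the real obstacle is this last one: showing that within the middle interval every non-triangle-group specialization is genuinely \emph{non}-discrete requires a full discreteness criterion (a J\o rgensen-type inequality or the Gilman--Maskit discreteness algorithm), not merely ping-pong, together with an exact bookkeeping of which parameter values produce which triangle group. A secondary but persistent technical nuisance is that for $t>0$ the order-$2$ generator is an orientation-reversing reflection, so the analysis must be carried out in $PGL_2(\mathbb{R})$ rather than $PSL_2(\mathbb{R})$, and one should pass to the orientation-preserving subgroup before quoting the standard Fuchsian-group machinery; the degenerate values $t=1$ (finite dihedral image) and $t=-1$ (classical $SL_2(\mathbb{Z})$, discrete but unfaithful) are handled directly.
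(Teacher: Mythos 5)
Your route is genuinely different from the paper's. The paper never projectivizes: it works inside the free normal subgroup $N=\langle \sigma_1^{-1}\sigma_2,\ \sigma_2\sigma_1^{-1}\rangle$ (exponent sum zero, so the image lies in $SL_2(\mathbb{R})$), exhibits explicit punctured-torus fundamental domains for the action of $\rho_3(N)$ on $\mathbb{H}^2$ in each discrete regime, and then promotes discreteness from $N$ to all of $B_3$ via a separate commutator/fixed-point argument (Theorem \ref{disc}); faithfulness comes from Hopficity of $F_2$ plus Long's lemma and the center. You instead pass to $B_3/Z(B_3)\cong PSL_2(\mathbb{Z})=\mathbb{Z}/2*\mathbb{Z}/3$ and run ping-pong on the elliptic generators, with the thresholds $\cosh d=2/\sqrt3$ and $5/3$ correctly reproducing $t=-1$ and $t=\tfrac{3\pm\sqrt5}{2}$; your fixed-point and distance computations check out, and this packaging explains the boundary values more conceptually than the paper does. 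What your approach buys is a uniform treatment of $t<0$ and $t>0$ and an intrinsic meaning for the endpoints; what the paper's buys is that the hard half of the classification becomes easy, as explained next.

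There are two genuine gaps. First, your asserted equivalence ``$\beta_t(B_3)$ discrete in $GL_2(\mathbb{R})$ iff $\rho_t(PSL_2(\mathbb{Z}))$ discrete in $PGL_2(\mathbb{R})$'' is only justified in one direction by the scalar-subgroup observation: discreteness of the $PGL_2$-image plus discreteness of $\beta_t(B_3)\cap\mathbb{R}^*I$ does give discreteness upstairs, but the converse fails for general subgroups of $GL_2(\mathbb{R})$ (the cyclic group generated by $\lambda R_\theta$ with $\lambda>1$ and $\theta/\pi$ irrational is discrete in $GL_2(\mathbb{R})$ yet has dense image in $PGL_2(\mathbb{R})$). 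That converse is exactly the direction you need to conclude non-discreteness in the middle interval. It is true here, but only because $[B_3,B_3]$ has finite index modulo the center and its image lies in $SL_2(\mathbb{R})$, where projectivization has finite kernel; you mention this subgroup only parenthetically and never close the loop. Second, the middle interval $\tfrac{3-\sqrt5}{2}<t<\tfrac{3+\sqrt5}{2}$ --- which you yourself flag as the obstacle --- is not actually proved: ``invoking the classification of two-generator discrete subgroups'' or Gilman--Maskit is a placeholder, and without it neither the non-discreteness of the generic parameter nor the identification of the discrete exceptions as triangle groups is established. The paper's choice of subgroup makes this step almost immediate: the generators of $N$ are themselves elliptic elements of $SL_2(\mathbb{R})$ with trace $1-t-\tfrac1t$ in that range, so when the rotation angle is an irrational multiple of $\pi$ the cyclic group they generate is already non-discrete, and the rational angles are sorted by the orbifold/triangle-group analysis. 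To repair your argument you would either have to carry out the two-generator discreteness classification in full, or do what the paper does and descend to an exponent-sum-zero elliptic element whose powers accumulate.
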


It is well known that the Burau representation of $B_3$ is faithful for transcendental $t$ and is unfaithful for $t=-1$, see\cite{BMP}. In particular, Theorem \ref{combind} shows that $t=-1$ is the only real negative unfaithful specialization and all other real unfaithful specializations of the Burau representation of $B_3$ come from the interval $(\frac{3-\sqrt{5}}{2}, \frac{3+\sqrt{5}}{2})$. A short solvability argument in Section 4 leads to a construction of  unfaithful specializations described in the following corollary.\\

\textbf{Corollary 4.2:}\textit{ Let $\alpha $ be a positive real root of the 2-1 entry of a matrix in the image of the Burau representation of $B_3$ not in $\langle\sigma_1\rangle$. Then $\alpha\in(\frac{3-\sqrt{5}}{2}, \frac{3+\sqrt{5}}{2})$ and specializing $t=\alpha$ is an unfaithful specialization of the Burau representation.}\\

One motivation for studying discrete representations of the braid group is the peculiar relationship between discreteness and faithfulness.  A restated version of Wielenberg's theorem  shows how a sequences of discrete representations of the braid group can give rise to a faithful representation \cite{WIE}. \\
 
\noindent \textbf{Wielenberg's Theorem:}\textit{
 Let $\{\rho_i\}$ be a sequence of matrix representations of $B_n$ that are discrete and co-finitely faithful. If $\{\rho_i\}$ converges algebraically to a representation $\rho$, then $\rho$ is both discrete and faithful.}\\

While Wielenberg's Theorem cannot be directly applied with Theorem \ref{combind} to give progress on the faithfulness of Burau for $n=4$,  it gives motivation for studying discreteness of a representation as a possible route to faithfulness.  For contrast, the BMW representations are known to be faithful \cite{ZIN,BIG2001}, but not much is known about the discreteness of the image.  There is still much to be explored about the relationship between faithfulness and discreteness of representations of the braid groups.

 Additionally, there is an interesting interplay of the faithfulness of the Burau representation for $B_3$ and $B_4$. The characterization from Theorem \ref{combind} can be used to create unfaithful specializations for $n=4$ that are faithful when restricted to $n=3$. An example of such a specialization is given in Section 4, in addition to examples of constructing unfaithful specializations for $n=3$. While Theorem \ref{combind} is worthy in its own right, these corollaries and examples are particularly taunting in light of the  faithfulness question for $n=4$.

\section{Background Information and Notation}

\subsection{}\textbf{The Burau Representation}

Let $\sigma_i$'s be the standard generators of $B_n$, the braid group on $n$ strands.
\begin{defn}The Burau representation of $B_3$ is the homomorphism $\rho_3:B_3\rightarrow GL_2(\mathbb{Z}[t,t^{-1}])$ given by
 \[ \rho_3(\sigma_1)=\left( \begin{array}{cc}
-t & 1  \\
0 & 1  \end{array} \right)\]

\[\rho_3(\sigma_2)=\left( \begin{array}{cc}
1 & 0  \\
t & -t  \end{array} \right).\] 

\end{defn}

The Burau representation is defined similarly for the braid group on arbitrary $n$ strands, denoted $\rho_n:B_n\rightarrow GL_{n-1}(\mathbb{Z}[t,t^{-1}]).$
Squier showed in \cite{Sq} that there exists a nonsingular $(n-1)\times (n-1)$ matrix $J$ over $\mathbb{Z}[t,t^{-1}]$ so that for every $w$ in $B_n$, $$\rho_n(w)^*J\rho_n(w)=J.$$  
 \textbf{Notation:}   For $M\in GL_{n-1}(\mathbb{Z}[t,t^{-1}])$, the entries of $M$ are integral polynomials in $t$ and $t^{-1}$, and we denote $M=M(t)$ and $M(t^{-1})$ to be the matrix that replaces $t$ by $t^{-1}$ in the entries of $M(t)$. The involution $*$ is given by $M(t)^*=M(t^{-1})^T$.

\begin{defn} A \textbf{specialization of the Burau representation} is a composition representation $\tau\circ\rho_3$, where $\tau:GL_2(\mathbb{Z}[t,t^{-1}])\rightarrow GL_2(\mathbb{R})$ is an evaluation map determined by $t\mapsto t_0$ for some fixed $t_0\in\mathbb{R}$.

\end{defn}

\begin{thm}\label{fifff}
For $t_0\in \R$, the image of the specialization of the Burau representation at $t_0$ is isomorphic to the image when specializing to $t_0^{-1}$.  In particular, specializing to $t_0$ is faithful if and only if specializing to $t_0^{-1}$ is faithful.
\end{thm}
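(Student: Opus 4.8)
The plan is to exhibit an explicit isomorphism between the two images by conjugating by a fixed matrix, using the symmetry of the Burau matrices under $t \mapsto t^{-1}$. First I would look at the generators. Observe that $\rho_3(\sigma_1)(t^{-1}) = \left( \begin{smallmatrix} -t^{-1} & 1 \\ 0 & 1 \end{smallmatrix} \right)$ and $\rho_3(\sigma_2)(t^{-1}) = \left( \begin{smallmatrix} 1 & 0 \\ t^{-1} & -t^{-1} \end{smallmatrix} \right)$. The idea is to find a single matrix $P \in GL_2(\mathbb{R})$, possibly depending on $t_0$ but not on the word, such that conjugation by $P$ carries $\rho_3(\sigma_i)(t_0)$ to a scalar multiple of $\rho_3(\sigma_i)(t_0^{-1})$, or more precisely so that the induced map on the whole image is a group isomorphism. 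A natural first attempt is a diagonal matrix $P = \mathrm{diag}(t_0, 1)$ or $P = \mathrm{diag}(1, t_0)$, since the off-diagonal entries of the two generator matrices differ only by powers of $t$; a quick computation on $\rho_3(\sigma_1)$ and $\rho_3(\sigma_2)$ will show which diagonal (or anti-diagonal) matrix does the job, up to an overall scalar that I can absorb since scalars are central in $GL_2$.

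Next I would handle the scalar discrepancy. It is likely that conjugation by $P$ sends $\rho_3(\sigma_i)(t_0)$ not exactly to $\rho_3(\sigma_i)(t_0^{-1})$ but to $\lambda \cdot \rho_3(\sigma_i)(t_0^{-1})$ for some scalar $\lambda$, or that one must also apply the transpose-inverse antiautomorphism. If a bare conjugation does not work, I would combine conjugation by $P$ with the inverse-transpose map $M \mapsto (M^{-1})^T$ or with $M \mapsto (M^*)$ using Squier's form $J$: since $\rho_3(w)^* J \rho_3(w) = J$, we get $\rho_3(w)^* = J \rho_3(w)^{-1} J^{-1}$, which relates the $t \mapsto t^{-1}$ substitution (hidden in the $*$) to an honest conjugate of $\rho_3(w)^{-1}$ in $GL_2$. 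Composing with the inversion antiautomorphism $w \mapsto w^{-1}$ of $B_3$ then turns this into a genuine representation isomorphism. So the map I expect to build is roughly $\tau_{t_0}\circ\rho_3(w) \mapsto P\,\big(\tau_{t_0^{-1}}\circ\rho_3(w^{-1})\big)^{\text{(something)}}\,P^{-1}$, which is a composition of a group automorphism of $B_3$, the representation at $t_0^{-1}$, and an inner automorphism of $GL_2(\mathbb{R})$ — hence an isomorphism of images, and faithful on one side iff faithful on the other.

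Finally I would record the consequence for faithfulness: an isomorphism of images does not by itself give equality of kernels, so I must be careful that the construction is of the form (automorphism of $B_3$) followed by (the $t_0^{-1}$ representation) followed by (conjugation), because only then does injectivity transfer. Since inversion $w \mapsto w^{-1}$ is a bijection of $B_3$ and conjugation by $P$ is a bijection of $GL_2(\mathbb{R})$, the kernel of the $t_0$ specialization is carried bijectively onto the kernel of the $t_0^{-1}$ specialization, giving the ``if and only if'' statement. The main obstacle I anticipate is pinning down the exact bookkeeping — whether a plain conjugation suffices or whether the transpose/inverse twist is genuinely needed — and checking that whatever extra maps are introduced really are automorphisms (of $B_3$ and of $GL_2$) rather than merely set bijections, so that both the ``isomorphic image'' and the ``faithful iff faithful'' conclusions are valid. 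This is a finite check on the two generators plus the relation $\sigma_1\sigma_2\sigma_1 = \sigma_2\sigma_1\sigma_2$, so I expect it to go through, but it is where the real content lies.
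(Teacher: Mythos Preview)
Your proposal is correct and lands on essentially the same argument as the paper: use Squier's relation $\rho_n(w)^* J \rho_n(w) = J$ to see that the specialization at $t_0^{-1}$ is conjugate (by $J^T$) to the contragredient $w \mapsto (\rho_n(w)(t_0)^{-1})^T$, from which both the image isomorphism and the equality of kernels follow. The paper skips your preliminary diagonal-conjugation attempt and packages the ``twist'' purely on the matrix side via the $GL_n$-automorphism $M \mapsto (M^{-1})^T$ (so no group-side inversion of $B_3$ is needed), which cleanly dissolves the automorphism-vs-antiautomorphism bookkeeping you flag at the end and makes the argument uniform in $n$.
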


\begin{proof}
Let $\psi$ be the contragradient representation of $\rho_n$. For $w\in B_n$, if $\rho_n(w)=M(t)$ then $\psi(w)=(M(t)^{-1})^T$ where $M\in GL_{n-1}(\mathbb{Z}[t,t^{-1}])$. From Squier, there exists a matrix $J$ so that $$M(t)^*=JM(t)^{-1}J^{-1}.$$
Taking the transpose of both sides shows that $M(t^{-1})$ is conjugate to $(M(t)^{-1})^T$ by $J^T$.  Thus $\rho_n$ and $\psi$ are conjugate representations. Conjugation preserves faithfulness.

\end{proof}

\begin{prop}\label{idisc}
Specializing the Burau representation to some $t_0$ is discrete if and only if specializing to $t_0^{-1}$ is discrete.
\end{prop}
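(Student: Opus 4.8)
The plan is to strengthen Theorem \ref{fifff} from a statement about abstract isomorphism of images to one about a \emph{topological} isomorphism of subgroups of $GL_2(\mathbb{R})$, since discreteness is a property of the subgroup in the ambient topology and not merely of the underlying abstract group.

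First I would revisit the identity used in the proof of Theorem \ref{fifff}. Writing $M(t)=\rho_3(w)$ for $w\in B_3$, Squier's relation gives $M(t)^*=JM(t)^{-1}J^{-1}$, and combined with $M(t)^*=M(t^{-1})^T$, transposing yields
$$ M(t^{-1}) = (J^T)^{-1}\,(M(t)^{-1})^T\,J^T, $$
an identity of matrices over $\mathbb{Z}[t,t^{-1}]$. I would then note that since $J$ is nonsingular over $\mathbb{Z}[t,t^{-1}]$, its determinant is a unit there, hence of the form $\pm t^{k}$; in particular $J(t_0)$ is invertible for every $t_0\in\mathbb{R}\setminus\{0\}$, which is the only range in which the Burau specialization lands in $GL_2(\mathbb{R})$ at all. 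Specializing the displayed identity at $t=t_0$ therefore produces a genuine matrix identity in $GL_2(\mathbb{R})$.

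Next I would introduce the map $\Phi:GL_2(\mathbb{R})\to GL_2(\mathbb{R})$ defined by $\Phi(M)=(J(t_0)^T)^{-1}(M^{-1})^T J(t_0)^T$. This is the composition of the map $M\mapsto (M^{-1})^T$, a continuous group automorphism of $GL_2(\mathbb{R})$ equal to its own inverse, with conjugation by the fixed invertible matrix $J(t_0)^T$, which is likewise a homeomorphic group automorphism; hence $\Phi$ is a self-homeomorphism of $GL_2(\mathbb{R})$. By the specialized Squier identity, $\Phi$ carries $\tau\circ\rho_3(B_3)$ evaluated at $t_0$ onto the image evaluated at $t_0^{-1}$. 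Since a homeomorphism of the ambient group sends discrete subsets to discrete subsets, one image is discrete if and only if the other is, which is the proposition.

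There is no deep obstacle here; the one point requiring care is that Theorem \ref{fifff} as stated only asserts abstract isomorphism, so I must go back into its proof to record that the isomorphism is realized concretely by conjugation composed with the transpose-inverse automorphism, both of which are topological. A minor secondary check is that $J(t_0)$ is invertible, i.e.\ that $t_0=0$ causes no difficulty, which follows from $\det J$ being a unit in $\mathbb{Z}[t,t^{-1}]$ together with the fact that $t_0=0$ is never a valid specialization.
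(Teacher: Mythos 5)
Your proposal is correct and follows essentially the same route as the paper: both rest on the specialized Squier identity $M(t_0^{-1}) = (J^T)^{-1}(M(t_0)^{-1})^T J^T$ and the observation that conjugation, inversion, and transposition preserve discreteness. Your version merely makes explicit the two points the paper leaves implicit, namely that these operations assemble into a self-homeomorphism of $GL_2(\mathbb{R})$ and that $J(t_0)$ is invertible because $\det J$ is a unit in $\mathbb{Z}[t,t^{-1}]$.
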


\begin{proof}
Theorem \ref{fifff} showed that $M(t_0^{-1})$ is conjugate to $(M(t_0)^{-1})^T$ by $J^T$ for every $M$ in the image of $\rho_n$. Discreteness is preserved by conjugation, inversion and transposition. So, specializing to $t_0^{-1}$ is discrete if and only if specializing to $t_0$  is discrete.
\end{proof}

\subsection{}\textbf{Subgroup Properties of $B_3$}\\

There are two well known subgroups of $B_3$ that play a vital role in the classification.
\begin{enumerate}
\item The center of $B_3$ is $Z(B_3)=\langle (\sigma_1\sigma_2)^3\rangle$ which is cyclic.

\item The normal subgroup $N= \langle a_1,a_2\rangle$ where $a_1=\sigma_1^{-1}\sigma_2$  and $a_2=\sigma_2\sigma_1^{-1}$, which is a free group on two generators. A proof of this will be shown in the proof of Theorem \ref{combind}.

\end{enumerate}

These subgroups will be used in combination with the following Lemmas and Theorem.

\begin{lem}[Long\cite{LONG}]\label{longlem} Let $\rho:B_n\rightarrow GL(V)$ be a representation and $K\triangleleft B_n$ with $K$ nontrivial and non central. If $\rho |_K$ is faithful, then $\rho$ is faithful except possibly on the center. \end{lem}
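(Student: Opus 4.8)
The plan is to show directly that $\ker\rho \subseteq Z(B_n)$, which is exactly the assertion that $\rho$ is faithful away from the center. Write $H = \ker\rho$, a normal subgroup of $B_n$. The whole argument rests on playing the two normal subgroups $H$ and $K$ against each other: faithfulness of $\rho|_K$ will force $H$ and $K$ to meet trivially, trivial intersection of two normal subgroups will force them to commute elementwise, and then the internal structure of $B_n$ will force anything commuting with a non-central normal subgroup to be central.

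First I would record the commutator identity. For $h \in H$ and $k \in K$, regrouping $[h,k] = h^{-1}k^{-1}hk$ in two ways gives $[h,k] = h^{-1}(k^{-1}hk) \in H$ (using $H \triangleleft B_n$) and $[h,k] = (h^{-1}k^{-1}h)k \in K$ (using $K \triangleleft B_n$), so $[h,k] \in H \cap K$ and hence $[H,K] \subseteq H \cap K$. Now $H \cap K \subseteq K$ lies in $\ker(\rho|_K)$, which is trivial because $\rho|_K$ is faithful; thus $H \cap K = \{1\}$ and therefore $[H,K] = \{1\}$. In other words $H$ centralizes $K$, i.e. $H \subseteq C_{B_n}(K)$.

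It remains to prove the structural fact that for a non-central normal subgroup $K$ the centralizer $C_{B_n}(K)$ equals $Z(B_n)$ (the inclusion $Z(B_n)\subseteq C_{B_n}(K)$ being automatic); combined with the previous step this yields $H \subseteq Z(B_n)$ and finishes the lemma. This is the step I expect to be the main obstacle, since it is the only place one must use something special about $B_n$ rather than formal group theory. I would argue it through the quotient $\overline{B_n}=B_n/Z(B_n)$: because $K$ is normal and non-central, its image $\overline{K}$ is a nontrivial normal subgroup of $\overline{B_n}$, and any $g \in C_{B_n}(K)$ has image $\overline{g}$ centralizing $\overline{K}$. For $n=3$ this is transparent: $\overline{B_3} \cong PSL_2(\mathbb{Z}) \cong \mathbb{Z}/2 \ast \mathbb{Z}/3$, a nonelementary free product, so the nontrivial normal subgroup $\overline{K}$ contains two independent infinite-order (hyperbolic) elements whose centralizers are distinct infinite cyclic groups meeting trivially; an element $\overline{g}$ centralizing both must lie in that trivial intersection, so $\overline{g}=1$ and $g \in Z(B_3)$. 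For general $n$ the same conclusion holds via the analogous structure of $B_n/Z(B_n)$ as a centerless mapping class group whose nontrivial normal subgroups contain independent pseudo-Anosov elements; this is the content of Long's original argument \cite{LONG}.

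Finally I would dispose of the degenerate cases. For $n=2$, $B_2 \cong \mathbb{Z}$ is abelian and equals its own center, so the hypothesis that $K$ is non-central is vacuous and nothing is to be shown; for $n \geq 3$ the argument above applies verbatim. The one subtlety worth double-checking is that the commutator step genuinely requires \emph{both} subgroups to be normal, which is precisely why the hypothesis $K \triangleleft B_n$ (rather than merely $K \leq B_n$) is essential, and why nontriviality and non-centrality of $K$ are exactly what make $\overline{K}$ a large enough normal subgroup for the centralizer computation to collapse to the center.
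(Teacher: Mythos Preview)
The paper does not supply a proof of this lemma at all: it is stated with attribution to Long \cite{LONG} and used as a black box, so there is nothing in the paper to compare your argument against directly.

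That said, your proof is correct and is essentially Long's original argument. The group-theoretic reduction---using normality of both $H=\ker\rho$ and $K$ to get $[H,K]\subseteq H\cap K\subseteq\ker(\rho|_K)=\{1\}$, hence $H\subseteq C_{B_n}(K)$---is exactly the standard opening move, and the remaining structural fact $C_{B_n}(K)\subseteq Z(B_n)$ for non-central normal $K$ is precisely what Long proves via the mapping class group interpretation. Your treatment of the $n=3$ case through $B_3/Z(B_3)\cong PSL_2(\mathbb{Z})\cong\mathbb{Z}/2\ast\mathbb{Z}/3$ is a clean specialization of that method and is all that the present paper actually needs, since Lemma~\ref{longlem} is only ever invoked for $B_3$. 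One small point worth tightening: rather than asserting that $\overline{K}$ contains ``two independent hyperbolic elements,'' it is slightly quicker to argue by contradiction that if some $\overline{g}\neq 1$ centralized $\overline{K}$ then $\overline{K}$ would sit inside the cyclic group $C(\overline{g})$, and then observe that $\mathbb{Z}/2\ast\mathbb{Z}/3$ has no nontrivial cyclic normal subgroup (finite normal subgroups of a free product are trivial, and an infinite cyclic normal subgroup would force the group to be virtually cyclic). This avoids having to produce the two elements explicitly.
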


\begin{lem}\label{free2}
Every homomorphism $\phi$ on $N$ with $\phi(N)$  a free group of rank two is an isomorphism onto its image.
\end{lem}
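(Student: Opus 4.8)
The plan is to recognize this as an instance of the Hopfian property of finitely generated free groups. Since $N=\langle a_1,a_2\rangle$ is free of rank two (as will be verified in the proof of Theorem~\ref{combind}), the hypothesis says precisely that $\phi$ restricts to a surjection $N\twoheadrightarrow\phi(N)$ onto a group that is itself free of rank two. Fixing an isomorphism $\phi(N)\cong N$, we may view $\phi$ as a surjective endomorphism of the rank-two free group $F_2$, and it then suffices to show that any such endomorphism is injective, i.e.\ that $\ker\phi=1$.

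First I would recall that $F_2$ is residually finite: for instance it embeds in $SL_2(\Z)$ via $a_1\mapsto\left(\begin{smallmatrix}1&2\\0&1\end{smallmatrix}\right)$, $a_2\mapsto\left(\begin{smallmatrix}1&0\\2&1\end{smallmatrix}\right)$ (the Sanov subgroup), and $SL_2(\Z)$ is residually finite. Then I would invoke Mal'cev's theorem that a finitely generated residually finite group is Hopfian, so every surjective endomorphism of $F_2$ is an automorphism. If a self-contained argument is preferred, the relevant case of Mal'cev's theorem is short: writing $K=\ker\phi$, the assignment $H\mapsto\phi^{-1}(H)$ is an injection from the (finite, by finite generation) set of index-$n$ subgroups of $F_2$ into itself, with image the subgroups of index $n$ containing $K$; finiteness forces every finite-index subgroup to contain $K$, and residual finiteness then gives $K=1$.

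Applying this, $\ker(\phi|_N)=1$, so $\phi|_N\colon N\to\phi(N)$ is an isomorphism onto its image, as claimed. I do not expect a genuine obstacle here; the only point worth flagging is that the hypothesis "$\phi(N)$ free of rank \emph{exactly} two" is essential, since a surjection of $F_2$ onto $\Z$ or onto the trivial group is far from injective. Thus the argument genuinely uses that the rank does not drop, which is exactly what Hopficity encodes. An alternative route avoiding residual finiteness would combine Grushko's theorem with the Nielsen--Schreier index formula to rule out a proper surjection of $F_2$ onto a rank-two free group, but the Hopfian formulation is the most economical.
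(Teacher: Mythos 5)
Your proposal is correct and follows essentially the same route as the paper: the paper's proof is exactly the observation that $N$ is free of rank two, hence Hopfian, so a surjection onto a rank-two free group must be injective. You simply supply the justification for Hopficity (residual finiteness plus Mal'cev) that the paper takes as known.
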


\begin{proof}                     

Since $N$ is a free group of rank two, it is Hopfian. It is given that $\phi(N)$ is also a free group of rank two. Therefore by definition of Hopfian, $\phi$ must be an isomorphism on $N$.

\end{proof}

\begin{lem}\label{faithcen}
The Burau representation on $B_3$ is faithful on the center for all real specializations of $t$ except $t=0,\pm 1$.
\end{lem}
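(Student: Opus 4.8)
The plan is to reduce the lemma to a single matrix computation. Since $Z(B_3)=\langle(\sigma_1\sigma_2)^3\rangle$ is infinite cyclic, a specialization $\tau\circ\rho_3$ restricts to a faithful representation on $Z(B_3)$ if and only if the image of the generator $(\sigma_1\sigma_2)^3$ has infinite order in $GL_2(\R)$. So I only need to identify the matrix $\rho_3\bigl((\sigma_1\sigma_2)^3\bigr)$ over $\Z[t,t^{-1}]$ and then ask for which real $t_0$ its specialization has finite order.

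First I would multiply out $\rho_3(\sigma_1\sigma_2)$ directly from the definition; one obtains the matrix $\left(\begin{smallmatrix}0&-t\\ t&-t\end{smallmatrix}\right)$, which has trace $-t$ and determinant $t^2$, so its eigenvalues are $t\omega$ and $t\bar\omega$ with $\omega$ a primitive cube root of unity. Cubing this matrix — directly, or by observing that the eigenvalues then become $t^3$ and $t^3$, or via Schur's lemma applied to the central element in the (easily checked to be irreducible for real $t_0\neq 0$) representation — yields
\[
\rho_3\bigl((\sigma_1\sigma_2)^3\bigr)=t^3 I.
\]
This is the entire content of the computation.

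Finally, under the specialization $t\mapsto t_0$ the group $\tau\rho_3(Z(B_3))$ is the cyclic group generated by the scalar matrix $t_0^3 I$, which is finite precisely when $t_0^3$ is a real root of unity, i.e. when $t_0\in\{1,-1\}$; the value $t_0=0$ must also be excluded since $\rho_3(\sigma_1)$ is then singular and no specialization exists. For every other real $t_0$ one has $|t_0|\neq 1$, hence $|t_0^{3k}|=|t_0|^{3k}\neq 1$ for all $k\neq 0$, so $t_0^3 I$ has infinite order and the restriction to $Z(B_3)$ is faithful. I do not anticipate a real obstacle; the only points requiring care are the standard fact recalled in Section 2 that $(\sigma_1\sigma_2)^3$ — rather than a proper power — generates $Z(B_3)$, and an honest treatment of the excluded parameters, in particular $t_0=-1$, whose central image has order $2$ rather than $1$.
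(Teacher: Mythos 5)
Your proposal is correct and follows essentially the same route as the paper: both identify $\rho_3\bigl((\sigma_1\sigma_2)^3\bigr)=t^3I$ and observe that this scalar matrix generates an infinite cyclic group exactly when $t_0\neq 0,\pm 1$. The only difference is cosmetic — the paper states the matrix outright while you derive it from the eigenvalues of $\rho_3(\sigma_1\sigma_2)$ — and your explicit handling of the excluded values (e.g.\ order $2$ at $t_0=-1$) is a harmless refinement.
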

\begin{proof}
 The center of $B_3$ is cyclicly generated by $(\sigma_1\sigma_2)^3$, where
\[ \rho_3\left(( \sigma_1\sigma_2)^3\right)=\left( \begin{array}{cc}
t^3 & 0  \\
0 & t^3  \end{array} \right).\]

This shows that $\rho_3(Z(B_3))$ is a free group on one generator when $t\neq \pm1,0$. So $\rho_3$ is faithful on $Z(B_3)$.
\end{proof}

\begin{cor}
 Away from 0 and $\pm 1$, if a specialization the Burau representation is faithful on $N$, then it is faithful on all of $B_3$.
\end{cor}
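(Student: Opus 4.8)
The plan is to combine Long's Lemma (Lemma~\ref{longlem}) with the centre computation of Lemma~\ref{faithcen}; once the hypotheses of Long's Lemma are checked, the corollary falls out in one line. First I would record that $N$ is an admissible choice of normal subgroup for Long's Lemma. It is normal in $B_3$ by construction, and since $N$ is free of rank two (as established in the proof of Theorem~\ref{combind}) it is both nontrivial and nonabelian. As $Z(B_3)=\langle(\sigma_1\sigma_2)^3\rangle$ is cyclic, hence abelian, $N$ cannot be contained in $Z(B_3)$, so $N$ is non-central. Thus $K=N$ satisfies all the hypotheses of Lemma~\ref{longlem}.

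Next, fix a real $t_0\neq 0,\pm 1$ and write $\rho=\tau\circ\rho_3$ for the associated specialization. Assuming $\rho|_N$ is faithful, Lemma~\ref{longlem} applied with $K=N$ gives that $\rho$ is faithful except possibly on the centre, i.e. $\ker\rho\subseteq Z(B_3)$. Hence $\ker\rho=\ker\rho\cap Z(B_3)=\ker\bigl(\rho|_{Z(B_3)}\bigr)$. By Lemma~\ref{faithcen}, $\rho$ is faithful on $Z(B_3)$ for every real $t_0\neq 0,\pm 1$, so this last kernel is trivial; therefore $\ker\rho$ is trivial and $\rho$ is faithful on all of $B_3$.

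I do not expect a genuine obstacle here: the content is entirely in Lemmas~\ref{longlem} and~\ref{faithcen}, and this statement is just their conjunction. The only point that needs a sentence of justification is that $N$ is non-central, which follows from $N$ being nonabelian while $Z(B_3)$ is abelian. If one prefers to avoid kernels altogether, the argument can be phrased as: an element of $\ker\rho$ lies in $Z(B_3)$ by Long's Lemma, and is then trivial because $\rho$ is injective on $Z(B_3)$ by Lemma~\ref{faithcen}.
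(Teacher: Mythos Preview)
Your proof is correct and follows essentially the same approach as the paper, which simply cites Lemma~\ref{faithcen} for faithfulness on the centre and Lemma~\ref{longlem} (with $K=N$) for faithfulness off the centre. You are merely more careful than the paper in verifying that $N$ is non-central before invoking Long's Lemma.
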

\begin{proof}
Lemma 2.7 proves that the specialization is faithful on the center. Since $N$ is a normal subgroup of $B_3$, Lemma 2.5 guarantees that the specialization is faithful on the rest of $B_3$. \end{proof}


\begin{thm}\label{disc}
If $\rho_3$ is discrete on $N$, then $\rho_3$ is discrete on all of $B_3$. 
\end{thm}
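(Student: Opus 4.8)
The plan is to show that $\rho_3(N)$ being discrete forces $\rho_3(B_3)$ to be discrete by exploiting the fact that $N$ has finite index in a slightly larger subgroup, together with the explicit description of the center. First I would recall that $N = \langle \sigma_1^{-1}\sigma_2, \sigma_2\sigma_1^{-1}\rangle$ is the kernel of the map $B_3 \to \Z$ sending each $\sigma_i \mapsto 1$ (the abelianization followed by the obvious quotient is not quite it — rather $N$ is the kernel of $B_3 \to \Z$, $\sigma_i \mapsto 1$, since both $a_1$ and $a_2$ have exponent sum zero and $B_3/N \cong \Z$ is generated by the image of $\sigma_1$). Hence $B_3 = \bigsqcup_{k\in\Z} \sigma_1^k N$, so $B_3/N$ is infinite cyclic and $N$ is \emph{not} of finite index; this means discreteness of $\rho_3(N)$ does not immediately pass to $\rho_3(B_3)$ by a finite-index argument, and some genuine work is needed. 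The key is that $\rho_3(\sigma_1)$ is diagonalizable with a specific form, and one understands how conjugation by $\rho_3(\sigma_1)$ acts on the discrete group $\rho_3(N)$.

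The central observation I would use: $\rho_3(\sigma_1^2)$ is a scalar-times-unipotent or, more precisely, the element $\sigma_1^2 \in B_3$ together with $N$ generates a subgroup of finite index — actually I would look for a normal subgroup $N'$ containing $N$ with $N'$ of finite index in $B_3$, for instance the pure braid group $P_3$, which has index $6$ in $B_3$. Since $P_3 = \langle A_{12}, A_{13}, A_{23}\rangle$ and $N \leq P_3$ (as $a_1, a_2$ are pure braids, being products $\sigma_i^{-1}\sigma_j$ with total exponent zero in each pair — one checks $a_1^2, a_2^2 \in P_3$ at least, and in fact $N \triangleleft P_3$ with $P_3/N$ abelian and finitely generated), one reduces to: (i) $\rho_3(N)$ discrete $\Rightarrow$ $\rho_3(P_3)$ discrete, and (ii) $\rho_3(P_3)$ discrete $\Rightarrow$ $\rho_3(B_3)$ discrete. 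Step (ii) is immediate because $[B_3 : P_3] = 6 < \infty$: a group containing a discrete subgroup of finite index is discrete (a subgroup $H \leq GL_2(\R)$ with a finite-index discrete subgroup $H_0$ is discrete, since $H$ is the union of finitely many cosets of $H_0$, each of which is discrete, and a finite union of discrete sets that are uniformly separated — which cosets of a discrete subgroup are — is discrete).

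For step (i), the point is that $\rho_3(P_3)/\rho_3(N)$ is a finitely generated abelian group, but to get discreteness one needs that the "extra" directions are not accumulating. Here I would instead argue directly: if $\rho_3(N)$ is discrete, it is in particular closed, and one shows $\rho_3(B_3)$ normalizes — or at least commensurates — $\rho_3(N)$, so $\rho_3(B_3)$ sits inside the commensurator, and then combine with the fact (Lemma 2.7, Lemma 2.6) that the center $\rho_3(Z(B_3)) = \langle t^3 I\rangle$ is itself discrete away from $t = 0, \pm 1$, while $B_3 = N \cdot Z(B_3) \cdot \langle \sigma_1 \rangle$ up to finite data. Actually the cleanest route: since $N \triangleleft B_3$ with $B_3/N \cong \Z = \langle \bar\sigma_1\rangle$, we have $\rho_3(B_3) = \bigcup_{k} \rho_3(\sigma_1)^k \rho_3(N)$, and I must show this union is discrete given each piece $\rho_3(\sigma_1)^k \rho_3(N)$ is discrete (a translate of a discrete set). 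The \textbf{main obstacle} is exactly ruling out accumulation across the infinitely many cosets $\rho_3(\sigma_1)^k\rho_3(N)$: one must show that $\rho_3(\sigma_1)^k \to$ nothing, i.e. the powers of $\rho_3(\sigma_1)$ do not bring cosets arbitrarily close together. I expect this is handled by noting $\rho_3(\sigma_1) = \begin{pmatrix} -t & 1 \\ 0 & 1\end{pmatrix}$ has an eigenvalue $-t$ with $|-t| \neq 1$ for the relevant $t$ (and the case $|t| = 1$, i.e. $t = \pm 1$, is excluded or trivial), so $\rho_3(\sigma_1)^k$ escapes to infinity in $GL_2(\R)$ as $|k| \to \infty$, forcing the cosets apart; alternatively and more robustly, one passes to the hyperbolic-isometry picture promised in the abstract, where $\rho_3(N)$ acting discretely on $\Hd$ extends to a discrete action of $\rho_3(B_3)$ because $\sigma_1$ acts as an isometry normalizing the group, hence lies in the (discrete) normalizer of a non-elementary discrete subgroup of isometries.
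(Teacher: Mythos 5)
Your proposal correctly isolates the crux --- $N$ has infinite index in $B_3$, so discreteness does not pass upward by a finite-index argument, and one must rule out accumulation across the infinitely many cosets $\rho_3(\sigma_1)^k\rho_3(N)$ --- but none of the three routes you sketch actually closes that gap. The reduction through $P_3$ fails at the first step: $a_1=\sigma_1^{-1}\sigma_2$ maps to a $3$-cycle under $B_3\to S_3$, so $a_1\notin P_3$ and $N\not\subseteq P_3$; and even with a correct finite-index overgroup of $N$ you would still be left with your step (i), which is the entire difficulty. Next, the claim that $\rho_3(\sigma_1)^k\to\infty$ ``forces the cosets apart'' is a non sequitur: a product $A_kB_k$ can converge to the identity while both factors diverge, so divergence of the coset representatives alone proves nothing. (This route \emph{is} salvageable when $|t|\neq 1$: every element of $\rho_3(N)$ has determinant $1$ while $\det\rho_3(\sigma_1)^{k}=(-t)^{k}$, so a sequence $\rho_3(\sigma_1)^{k_j}n_j\to Id$ forces $k_j=0$ eventually, and then discreteness, hence closedness, of $\rho_3(N)$ finishes. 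But you did not make that argument, and it says nothing at $|t|=1$.)

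The real issue is your ``more robust'' alternative: the assertion that $\rho_3(\sigma_1)$ lies in the normalizer of a non-elementary discrete group of isometries and that such normalizers are discrete is precisely the content of the theorem being proved, so quoting it as known reduces the statement to itself. The paper's proof is exactly a proof of this normalizer fact: given $\gamma_k\to Id$ in $\rho_3(B_3)$ with $\gamma_k\neq Id$, normality of $N$ puts the commutators $[\phi,\gamma_k]$ inside the discrete group $\rho_3(N)$ for each fixed $\phi\in\rho_3(N)$; since these commutators converge to the identity they are eventually trivial, so for large $k$ the element $\gamma_k$ commutes with a hyperbolic $\eta\in\rho_3(N)$ chosen so that $Fix(\eta)$ avoids $Fix(\gamma_k)\cup Fix(\gamma_k^2)$, and commuting forces $\gamma_k$ to permute the two fixed points of $\eta$ --- a contradiction. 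If you want to invoke the normalizer fact you must either prove it along these lines or cite it, and in either case you must verify the non-elementary hypothesis for the groups at hand (the paper derives it from $B_3$ not being virtually solvable and exhibits the needed hyperbolic element in $\rho_3(N)$); without non-elementariness the conclusion can fail.
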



\begin{proof}
Assume for a contradiction that  $\{\gamma_k\}$ is a sequence in $\rho_3(B_3)$ converging to the identity but $\gamma_k\neq Id$ for all $k$. Then for every fixed $\phi\in \rho_3(N)$, the commutator sequence $\{[\phi,\gamma_k]\}$ also converges to the identity. Since $N$ is normal  and $\rho_3(N)$ is discrete, then $\{[\phi, \gamma_k]\}\subseteq \rho_3(N)$ and for some $n_0\in\mathbb{N}$, $[\phi, \gamma_k]=Id$ for all $k>k_0$. This gives that for all $k>n_0$, $$\phi\gamma_k=\gamma_k\phi.$$ 

 
Because $B_3$ is not virtually solvable,  $\rho_3(B_3)$ is non-elementary and $\rho_3|_N$ is discrete, there exists a hyperbolic element $\eta$ of $\rho_3(N)$ so that $Fix(\eta)$ is disjoint from $Fix(\gamma_k)\cup Fix(\gamma_k^2)$ for all $k>n_0$ \cite[p. 606]{RAT}. Since $\eta$ is hyperbolic, let $\{y_1,y_2\}=Fix(\eta)$. As shown above, $\eta$ and $\gamma_k$ must commute, which gives that $$\eta\gamma_k(y_i)=\gamma_k\eta(y_i)=\gamma_k(y_i).$$

This implies that $\gamma_k(y_i)$ is a fixed point of $\eta$, which means $\gamma_k(y_i)\in\{y_1,y_2\}$. However, if $\gamma_k(y_1)=y_1$ then $y_1$ is a fixed point of both $\gamma_k$ and $\eta$. If $\gamma_k(y_1)=y_2$ then $\gamma_k(y_2)=y_1$ which forces $\gamma_k^2(y_1)=y_1$. In either case, $y_1$ is a fixed point of $\gamma_k$ or $\gamma_k^2$ which contradicts the choice of $\eta$. 
 \end{proof}
 
 \noindent

\noindent\textbf{Remark:} Theorem \ref{disc} can be generalized with effectively the same proof, but is a slight tangent from the realm of braids and requires a bit of hyperbolic geometry.\\

\noindent\textbf{Theorem \ref{disc} generalized:} \textit{Let $G$ be a group that is not virtually solvable and $K$ a non central normal subgroup of $G$. If $\rho: G\rightarrow Isom^+(\mathbb{H}^n)$ is a homomorphism so that $\rho(G)$ is non-elementary, $\rho|_K$ is discrete, and $\rho(K)\not\subset Ker(\rho)$ then $\rho$ is discrete on all of $G$.}

\section{Main Result}

\begin{proof}[Proof of Theorem \ref{combind}]
$\\$
With Theorem \ref{disc} in sight, the image of the normal subgroup $N$ under $\rho_3$ is generated by the following two matrices.
\[\rho_3(a_1)=  \left( \begin{array}{cc}
\frac{t-1}{t} & -1  \\
t & -t  \end{array} \right)\hspace{1cm} \rho_3(a_2)= \left(\begin{array}{cc}
-\frac{1}{t} & \frac{1}{t}  \\
-1 & 1-t  \end{array} \right)\]

Next, define $\iota, x$ and $y$ as follows
\[ \iota=\left( \begin{array}{cc}
1 & 0  \\
0 & -1  \end{array} \right),\]

\[  x=\iota^{-1}\rho_3(a_2)\iota= \left(\begin{array}{cc}
-\frac{1}{t} & -\frac{1}{t}  \\
1 & 1-t  \end{array} \right)\text{, \hspace{1mm} and   \hspace{1mm}} y= \iota^{-1}\rho_3(a_1)\iota=\left( \begin{array}{cc}
\frac{t-1}{t} & 1  \\
-t & -t  \end{array} \right).\]

 Let $S_t$ denote the specialization of $\rho_3$ for some fixed $t\in \mathbb{R}$ and $M=\langle x,y\rangle$ in $GL_2(\mathbb{R})$.  Since $S_t(N)$ is conjugate to $M$ by $\iota$, the discreteness of  $S_t(N)$ is completely determined by the discreteness of $M$.  
 
 Let $D^2=\mathbb{H}^2\cup S^1_{\infty}$ denote the Poincare disk model of the upper half plane. Notice that $x,y\in SL_2(\Z[t,\frac{1}{t}])$ and tr$(x)= $ tr$(y)= -\frac{1}{t}+1-t$. By comparing $( -\frac{1}{t}+1-t)^2$ to $4$, both $x$ and $y$ act as isometries of the following type:
\begin{enumerate}[a)]
\item Hyperbolic when $t < 0$ or $0 < t < \frac{3 - \sqrt{5}}{2}$ or $ t > \frac{3 + \sqrt{5}}{2}$,
\item Elliptic when $ \frac{3 - \sqrt{5}}{2}<t<\frac{3 + \sqrt{5}}{2}$,
\item  Parabolic when $t= \frac{3 \pm \sqrt{5}}{2}$.
 \end{enumerate}

Consider the following cases on $t\in \R$.
$\\$
\noindent
\textbf{Case 1)} Let $t<0$.
$\\$
In this range of $t$, both $x$ and $y$ act as hyperbolic isometries on  $D^2$. Consider the following images of $\infty$: 
$$y^{-1}(\infty)=-1  \text{, \hspace{1mm} and   \hspace{1mm}} xy^{-1}(\infty)=0$$
$$yxy^{-1}(\infty)=-\frac{1}{t}=x(\infty).$$



The unshaded region of Figure \ref{Maction} is a fundamental domain for the  action of $M$ on $D^2$. So $\mathbb{H}^2/M$ is a punctured torus, showing that $M$ and  $S_t(N)$ are discrete, and $M$ is a free group of rank 2.   By Theorem \ref{disc}, since $S_t$ is discrete on $N$ then it is discrete on all of $B_3$.

\begin{figure}[h!]
\centering
\begin{tikzpicture}
  \begin{scope}
    \clip (0,0) circle (2cm);
      \draw[lgray,fill=lgray] (-3,0) circle (2.1cm); 
        \draw[lgray,fill=lgray] (0,-3) circle (2.1cm); 
          \draw[lgray,fill=lgray] (3,0) circle (2.1cm); 
        \draw[lgray,fill=lgray] (0,3) circle (2.1cm); 
   
    \draw[fill] (0,0) circle [radius=0.06] node[right]{$p$};
  \end{scope}
  \draw[thick] (0,0) circle (2cm);
     \draw[orange,line width=.6mm] (-1.4,-1.38) arc (-42:45:2.03);
     \draw[orange,line width=.6mm] (1.45,-1.4) arc (225:135:2.0);
     \draw[blue,line width=.6mm] (-1.4,1.45) arc (225:315:2.0);
     \draw[blue,line width=.6mm] (-1.4,-1.45) arc (135:45:2.0);
  \fill[black] (1.4, 1.4) circle (.1cm)  node[right]{  \hspace{2mm}$xyx^{-1}(\infty)=\frac{1}{-t}=x(\infty)$};
\fill[black] (-1.4, 1.4) circle (.1cm)  node[left]{ \hspace{2mm} $\infty$\hspace{2mm}};
\fill[black] (-1.4, -1.4) circle (.1cm)  node[left]{ \hspace{2mm} $y^{-1}(\infty)=-1$};
\fill[black] (1.4, -1.4) circle (.1cm)  node[right]{ \hspace{2mm} $xy^{-1}(\infty)=0$};

\end{tikzpicture}
\caption{$D^2$ with geodesics connecting images of $\infty$.}
\label{Maction}
\end{figure}
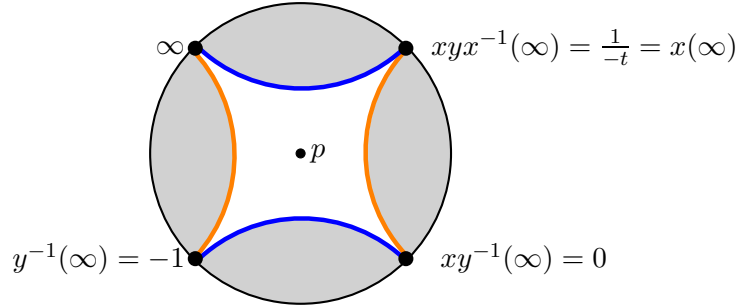


By Lemma \ref{free2}, since $M\cong S_t(N)$, $S_t$ is faithful on $N$ and $N$ is free of rank two. By Lemmas \ref{longlem} and \ref{faithcen}, $S_t$ is faithful on all of $B_3$.
$\\$

\noindent
\textbf{Case 2)} Let $t= \frac{3+\sqrt{5}}{2}$.

For this value of $t$, $x$, $y$ and $yx^{-1}$ are parabolic isometrics. Let $x^{-1}_f, y_f$ and $z_f$ denote fixed points of $x^{-1}$, $y$ and $yx^{-1}$ respectively. By computing eigenvectors, these fixed point are
$$x^{-1}_f=\frac{-1+\sqrt{5}}{2}, \hspace{5mm} y_f=\frac{1-\sqrt{5}}{2}, \hspace{5mm} z_f=\frac{-7+3\sqrt{5}}{2}.$$

Figure \ref{atu} shows a fundamental domain for the action of $M$ on $D^2$, again showing that $\mathbb{H}^2/M$ is a punctured torus. By the same arguments as in case 1,  $S_t$ is discrete and faithful on all of $B_3$.
\begin{figure}[h]
    \centering
    
    \begin{tikzpicture}
\begin{scope}
\clip(0,0) circle (2cm);
\draw[fill=lgray] (0,0) circle (2cm);
\draw[thick,orange, fill =white] (2,2) circle (2cm);
\draw[thick,blue, fill =white] (-2,2) circle (2cm);
\draw[thick,orange, fill =white] (2,-2) circle (2cm);
\draw[thick,blue, fill =white] (-2,-2) circle (2cm);
      \end{scope}
      \fill[black] (0, -2) circle (.1cm)  node[below]{ $z_f$};
       \fill[black] (0, 2) circle (.1cm)  node[above]{ $x^{-1}(z_f)$};
       \fill[black] (2, 0) circle (.1cm)  node[right]{ $y_f$};
        \fill[black] ( -2,0) circle (.1cm)  node[left]{ $x_f^{-1}$};
        
          \draw[thick,->] (-2.2,-.5) arc (-95:95:.5);
           \draw[thick,->] (2.1,.5) arc (95:275:.5);
            \draw[thick,->] (-.5,-2.1) arc (185:-5:.5);
            
            \draw[white] (1.2,1.2) circle [radius=0.01] node[left,black]{$A$};
             \draw[white] (1.2,-1.2) circle [radius=0.01] node[left,black]{$B$};
             \draw[white] (-1.2,-1.2) circle [radius=0.01] node[right,black]{$C$};
              \draw[white] (-1.2,1.2) circle [radius=0.01] node[right,black]{$D$};
            
                   \draw[thick] (0,0) circle (2cm);
\end{tikzpicture}

    \caption{The shaded region is the fundamental domain for the action of $M$ on $D^2$ when $t=\frac{3+\sqrt{5}}{2}$.}
    \label{atu}
\end{figure}


$\\$
\noindent
\textbf{Case 3)} Let $t> \frac{3+\sqrt{5}}{2}$.

In this region, both $x$, $y$ and $x^{-1}$ act as hyperbolic isometries on the $D^2$. As shown in Case 2, the fixed points of $x^{-1}$, $y$ and $yx^{-1}$ are distinct when $t=\frac{3+\sqrt{5}}{2}$. 
If there exists a $t$ so that any two of $x^{-1}$, $y$ or $x^{-1}y$ shared a fixed point then, then both $x^{-1}$ and $y$ share a fixed point. In other words, $x^{-1}$ and $y$ have a common eigenvector and are simultaneously conjugate to matrices of the form
\[  x^{-1}\sim \left(\begin{array}{cc}
a & *  \\
0 & a^{-1}  \end{array} \right)\text{ \hspace{1mm} and   \hspace{1mm}} y\sim \left( \begin{array}{cc}
b & *  \\
0 & b^{-1}  \end{array} \right)\]

for some $a,b\in \R$. This forces the commutator $[x^{-1},y]$ to have the form

\[[x^{-1},y]\sim \left( \begin{array}{cc}
1 & *  \\
0 & 1 \end{array} \right),\]
 
\noindent which gives tr$([x^{-1},y])=2$. However, by direct computation, tr$([x^{-1},y])=\frac{(1 + t^2) (1 - t^2 + t^4)}{t^3}$ which is strictly greater than $2$ for $t>\frac{3+\sqrt{5}}{2}$. So as $t$ increases, all six  fixed points of $x^{-1}$, $y$ and $x^{-1}y$ remain distinct for all $t>\frac{3+\sqrt{5}}{2}$.

Let $x_{\pm}, y_{\pm}$ and $z_{\pm}$ denote the fixed points of each $x^{-1}$, $y$ and $yx^{-1}$ respectively.  Figure \ref{greaterthanu}  shows a fundamental domain for the action of $M$ on $D^2$, and $\mathbb{H}^2/M$ is again a punctured torus. So $S_t$ is discrete and faithful on $B_3$.

\begin{figure}[h]
    \centering
    
    \begin{tikzpicture}
\begin{scope}
\clip(0,0) circle (2cm);
\draw[fill=lgray] (0,0) circle (2cm);
\draw[thick,blue, fill =white] (2,2) circle (2cm);
\draw[thick,blue, fill =white] (-2,2) circle (2cm);
\draw[thick,orange, fill =white] (2,-2) circle (2cm);
\draw[thick,orange, fill =white] (-2,-2) circle (2cm);
      \end{scope}
      \fill[black] (0, -2) circle (.1cm)  node[below]{ $x_+$};
       \fill[black] (0, 2) circle (.1cm)  node[above]{ $y_+$};
       \fill[black] (2, 0) circle (.1cm)  node[right]{ $x^{-1}(z_-)$};
        \fill[black] ( -2,0) circle (.1cm)  node[left]{ $z_-$};
        
          \draw[thick,->,gray] (-1.68,1.11) arc (6:-35:1.5);
          \fill[gray] ( -1.68,1.11) circle (.08cm)  node[left]{ $z_+$};
            \draw[thick,->,gray] (.02,-1.9) arc (140:85:1.5);
             \fill[gray] ( 1.41,-1.41) circle (.08cm)  node[below]{   $x_-$};
              \draw[thick,->,gray] (-.05,1.93) arc (-40:-100:1.3);
                \fill[gray] (-1.35, 1.5) circle (.08cm)  node[above]{ $y_-$};
            
            \draw[white] (1.2,1.2) circle [radius=0.01] ;
             \draw[white] (1.2,-1.2) circle [radius=0.01] ;
             \draw[white] (-1.2,-1.2) circle [radius=0.01] ;
              \draw[white] (-1.2,1.2) circle [radius=0.01] ;
            
                   \draw[thick] (0,0) circle (2cm);
\end{tikzpicture}

    \caption{The shaded region is the fundamental domain for the action of $M$ on $D^2$ when $t>\frac{3+\sqrt{5}}{2}$.}
    \label{greaterthanu}
\end{figure}




$\\$
\noindent
\textbf{Case 4)} Let $0<t\leq\frac{3-\sqrt{5}}{2}$.

 Immediately from case 2, case 3, and  Corollary \ref{disc}, $S_t$ is discrete and faithful on all of $B_3$.

$\\$
\noindent
\textbf{Case 5)} Let $\frac{3-\sqrt{5}}{2} <t<\frac{3+\sqrt{5}}{2}$.

In this region, $x^{-1}$, $y$ and $yx^{-1}$ are all elliptic with the same trace $1-t-\frac{1}{t}$. Elliptic isometries are diagolizable with diagonal entries complex conjugate roots of unity. So the trace is $2\cos\theta$ for some $\theta$ which is the rotation angle for the isometry. At $t=\frac{3+\sqrt{5}}{2}$, the trace of $x^{-1}$, $y$ and $yx^{-1}$ are all equal to $-2$. To account for this negative sign, the following equation must hold 
$$-2\cos\theta=1-t-\frac{1}{t}.$$
Solving for $t$ in terms of $\theta$ gives
$$t=\frac{1+2\cos\theta\pm\sqrt{(2\cos\theta+1)^2-4}}{2}.$$

Since $t$ is real valued, the discriminant must be nonnegative, forcing

$$\cos\theta\leq-\frac{3}{2}\hspace{1cm}\text{ or} \hspace{1cm} \cos\theta\geq\frac{1}{2}.$$
Thus, the only possible rotation angles for $x^{-1}$, $y$ and $yx^{-1}$ are  $0\leq \theta \leq \frac{\pi}{3}$ or $\frac{5\pi}{3}\leq \theta\leq 2\pi$.
Consider the following cases for $\theta$.
\begin{enumerate}
\item If $\theta=d\pi$ where $d$ is irrational. 

Let $x_f$ and $y_f$ be the fixed points of $x$ and $y$ respectively. Since $y$ acts as a rotation about $y_f$, the set $\{y^i(x_f)\}_{i\in\mathbb{N}}$ lies in an $S^1$ centered at $y_f$. Since $\frac{\theta}{\pi}$ is irrational, $y^i(x_f)$ is distinct for each $i$. By compactness, $\{y^i(x_f)\}_{i\in\mathbb{N}}$ has an accumulation point, giving the orbit of $x_f$ is not discrete and the action of $M$ is not discrete.

\item If $\theta=\frac{2\pi}{n}$ for some $n\in\Z$.

Then $M$ is the triangle group with presentation $\langle x,y| x^n=y^n=(xy)^n=1\rangle$. The bounds for $\theta$ force $n\geq 6$ and all such $n$ occur from specializations of $t$ satisfying $\frac{3-\sqrt{5}}{2} <t<\frac{3+\sqrt{5}}{2}$. For $n\geq 6$, $\frac{1}{n}+\frac{1}{n}+\frac{1}{n}<1$ so $M$ is a hyperbolic triangle group and is known to be discrete.

\item  If $\theta=\frac{ 2\pi k}{m}$ for $k,m\in \Z$ relatively prime.

The classification of good orbifolds gives that $D^2/M$ can not yield a cone angle of $\frac{ 2\pi k}{m}$ for $k,m\in \Z$ relatively prime. So
 the action of $M$ is not discrete.
 

\end{enumerate}
\end{proof}

\section{Corollaries and Examples}

There is interesting faithfulness interplay between the Burau representations $\rho_3$ on $B_3$ and $\rho_4$ on $B_4$. 
The underlying reason for this interplay is the block structure of $\rho_4$ shown in the definition below.
 \[ \rho_4(\sigma_1)=\left( \begin{array}{ccc}
-t & 1 & 0 \\
0 & 1  & 0 \\
0 & 0 & 1\end{array} \right)=
\left(
\begin{array}{c|c}

 \raisebox{-15pt}{{\LARGE\mbox{{$\rho_3(\sigma_1)$}}}} &0\\[-1ex]
&0\\\hline
 0\text{\hspace{6mm}}0&1
\end{array}
\right),
\]

\[ \rho_4(\sigma_2)=\left( \begin{array}{ccc}
1 & 0 &0 \\
t & -t  & 1\\
0 & 0 & 1\end{array} \right)=
\left(
\begin{array}{c|c}

 \raisebox{-15pt}{{\LARGE\mbox{{$\rho_3(\sigma_2)$}}}} &0\\[-1ex]
&1\\\hline
 0\text{\hspace{6mm}}0&1
\end{array}
\right),
\]

\[\rho_4(\sigma_3)=\left( \begin{array}{ccc}
1 & 0  & 0 \\
0 & 1 & 0 \\
0 & t & -t  \end{array} \right).\]

One way to create an unfaithful specialization of $\rho_4$ is to "extend" an unfaithful specialization of $\rho_3$. More precisely, suppose the specialization of $\rho_3$ at $\eta$ is unfaithful, and let $K$ denote the kernel in $B_3$. We can identify $K$ as a subgroup of $B_4$ under the standard inclusion. From the block structures shown above, $\rho_4(K)$ consists of upper triangular matrices with ones along the diagonals, which is a nilpotent group as a subgroup of the Heisenberg group. Thus the upper central series finitely terminates yielding a nontrivial subgroup of $K$ that maps to the identity by $\rho_4$. Therefore, the specialization of $\rho_4$ at $\eta$ is also unfaithful.

Example 4.1 shows one method to create unfaithful specializations of $\rho_3$, which consequently are also unfaithful specializations of $\rho_4$. Because of this consequential relationship, it is perhaps more interesting to find an unfaithful specialization of $\rho_4$ that is faithful when restricted to $B_3$. Example 4.3 gives a construction of such a specialization.

\begin{ex} A method to create unfaithful specializations of $\rho_3$ on $B_3$. \end{ex}

Let $w$ be a word in $B_3$ different from $\sigma_1^k$. Let $f_w$ be a polynomial factor of the 2-1 entry of $\rho_3(w)$ and $t_w$ be a root of $f_w$. Specializing to $t=t_w$ leaves $S_{t_w}(w)$ an upper triangular matrix. Since the image of $\sigma_1$ is also upper triangular, the group $\langle S_{t_w}(\sigma_1), S_{t_w}(w) \rangle$ is solvable. Therefore, specializing to $t_w$ cannot be faithful since $B_3$ does not have  solvable subgroups. 

Some examples such $w$'s and $f_w$'s are listed here.

\begin{enumerate}
\item Let $w=\sigma_2^{-2}\sigma_1\sigma_2^{-1}$ with $f_w= -1 + t - 2 t^2 + t^3$ which has one real root.
\item Let $w=\sigma_2^5\sigma_1^2\sigma_2^{-4}\sigma_1\sigma_2^3$ and 

\noindent
$f_w=1 - 3 t + 6 t^2 - 10 t^3 + 13 t^4 - 16 t^5 + 16 t^6 - 15 t^7 + 
 12 t^8 - 8 t^9 + 5 t^{10} - 3 t^{11} + t^{12}$ which has two real roots.

\end{enumerate}

Theorem \ref{combind} proved that all real unfaithful specializations of $\rho_3$ come from the interval $(\frac{3-\sqrt{5}}{2}, \frac{3+\sqrt{5}}{2})$. Thus we can conclude that all real roots of $f_w$ must lie in the interval $(\frac{3-\sqrt{5}}{2}, \frac{3+\sqrt{5}}{2})$. This proves the following corollary.

\begin{cor}
Real roots of the 2-1 entries of  Burau matrices not in $\langle\sigma_1\rangle$ must lie in the interval $(\frac{3-\sqrt{5}}{2}, \frac{3+\sqrt{5}}{2})$.
\end{cor}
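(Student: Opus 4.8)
The plan is to derive this corollary directly from Theorem \ref{combind} together with the solvability argument sketched in Example 4.1. Let $w \in B_3$ with $w \notin \langle \sigma_1 \rangle$, write $\rho_3(w)$ as a matrix over $\Z[t,t^{-1}]$, and let $\alpha$ be a positive real root of the $(2,1)$-entry of $\rho_3(w)$ (after clearing denominators, i.e.\ a root of the numerator polynomial $f_w$). First I would verify that $\alpha \neq 0$ is automatic since we are assuming $\alpha$ positive, and that $\alpha$ lies in the range where the specialization $S_\alpha$ is defined (i.e.\ $\alpha > 0$, so in particular $\alpha \neq -1$ and we are in the regime of cases (2) or (5) of Theorem \ref{combind}).

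Next, the key step: specializing $t = \alpha$ makes $S_\alpha(w)$ upper triangular, because its $(2,1)$-entry vanishes. Since $S_\alpha(\sigma_1) = \begin{pmatrix} -\alpha & 1 \\ 0 & 1 \end{pmatrix}$ is also upper triangular, the subgroup $\langle S_\alpha(\sigma_1), S_\alpha(w) \rangle$ of $GL_2(\R)$ is a group of upper-triangular matrices, hence metabelian, hence solvable. On the other hand, $\langle \sigma_1, w \rangle \leq B_3$ is \emph{not} solvable: since $w \notin \langle \sigma_1 \rangle$, this subgroup is not cyclic, and it is a standard fact (e.g.\ since $B_3$ has trivial-center quotient $\mathrm{PSL}_2(\Z)$ and noncyclic subgroups of $B_3$ contain free subgroups) that every noncyclic subgroup of $B_3$ contains a nonabelian free subgroup and in particular is not solvable. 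Therefore $S_\alpha$ restricted to $\langle \sigma_1, w\rangle$ — and a fortiori $S_\alpha$ itself — has nontrivial kernel, so $S_\alpha$ is an unfaithful specialization of $\rho_3$.

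Finally, I would invoke Theorem \ref{combind}: the only unfaithful real specializations of $\rho_3$ occur for $t \in (\frac{3-\sqrt5}{2}, \frac{3+\sqrt5}{2})$ — indeed cases (1) and (2) are explicitly faithful, and $t = -1$ is excluded since $\alpha > 0$, so an unfaithful $\alpha$ must fall in the open interval of case (3). Hence $\alpha \in (\frac{3-\sqrt5}{2}, \frac{3+\sqrt5}{2})$, which is exactly the assertion of the corollary. (The hypothesis $w \notin \langle \sigma_1 \rangle$ in the corollary's phrasing ``Burau matrices not in $\langle \sigma_1 \rangle$'' should be read via $\rho_3$; note the $(2,1)$-entry of $\rho_3(\sigma_1^k)$ is identically $0$, so such matrices are correctly excluded.)

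The only genuinely delicate point is the non-solvability of $\langle \sigma_1, w \rangle$ for $w \notin \langle\sigma_1\rangle$; everything else is bookkeeping on top of Theorem \ref{combind}. One clean way to handle it: if $\langle\sigma_1, w\rangle$ were solvable, then because $B_3$ is torsion-free and (being a central extension of $\mathrm{PSL}_2(\Z)$) has the property that every solvable subgroup is cyclic, we would get $w \in \langle\sigma_1\rangle$, a contradiction. Alternatively one can argue through the image in $\mathrm{PSL}_2(\Z)$ directly. I expect this subgroup-structure lemma to be the main obstacle only in the sense of citing it precisely; the logical skeleton of the proof is short.
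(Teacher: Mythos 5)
Your overall strategy is exactly the paper's: the solvability argument of Example 4.1 shows that specializing at a root of the $2$--$1$ entry kills faithfulness, and Theorem \ref{combind} then forces the root into $(\frac{3-\sqrt{5}}{2},\frac{3+\sqrt{5}}{2})$. However, the one point you single out as delicate --- the non-solvability of $\langle \sigma_1, w\rangle$ for $w\notin\langle\sigma_1\rangle$ --- is handled with a claim that is false: it is not true that every noncyclic subgroup of $B_3$ contains a nonabelian free group, nor that every solvable subgroup of $B_3$ is cyclic. The center gets in the way: $\langle \sigma_1, (\sigma_1\sigma_2)^3\rangle \cong \mathbb{Z}^2$ is a noncyclic, solvable (indeed abelian) subgroup, and $(\sigma_1\sigma_2)^3\notin\langle\sigma_1\rangle$. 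More generally, since $B_3/Z(B_3)\cong PSL_2(\mathbb{Z})\cong \mathbb{Z}/2 * \mathbb{Z}/3$ contains infinite dihedral subgroups, $B_3$ even has nonabelian solvable subgroups. So the ``standard fact'' you cite cannot be cited; as written, your argument does not rule out the possibility that $\langle\sigma_1,w\rangle$ is solvable for some $w\notin\langle\sigma_1\rangle$. (To be fair, the paper's own phrasing ``$B_3$ does not have solvable subgroups'' is equally false as literally stated; the gap is inherited, but you explicitly committed to a justification that fails.)

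The patch is not hard, and is worth recording. The image of $\sigma_1$ in $PSL_2(\mathbb{Z})$ is a primitive parabolic, and any solvable subgroup of $PSL_2(\mathbb{Z})$ containing a parabolic must stabilize its fixed point at infinity, hence lies in the cyclic group generated by that primitive parabolic (the dihedral subgroups are generated by elliptic involutions and contain no parabolics). Pulling back to $B_3$, a solvable subgroup containing $\sigma_1$ is contained in $\langle\sigma_1\rangle\cdot Z(B_3)\cong\mathbb{Z}^2$, so the only $w$ for which $\langle\sigma_1,w\rangle$ is solvable are $w=\sigma_1^{a}(\sigma_1\sigma_2)^{3b}$. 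For such $w$, $\rho_3(w)=t^{3b}\rho_3(\sigma_1)^a$ has identically vanishing $2$--$1$ entry, so these words contribute no roots and the corollary is unaffected. With that lemma in place (and your restriction to positive roots, which correctly sidesteps the unfaithful specialization at $t=-1$ and matches the statement of Corollary 4.2 in the introduction), your proof goes through and coincides with the paper's.
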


\begin{ex} An unfaithful specialization of $\rho_4$ on $B_4$.

\end{ex}

For simplification, let $x=\sigma_1\sigma_3^{-1}$ and $y=\sigma_2x\sigma_2^{-1}$. Consider the following words

$$\omega_1=x^{-1}y^2x^{-1}yxyx^2y^{-2}x^{-1}y^{-3}$$

$$\omega_2=y^{-1}xy^{-2}xy^{-1}x^{-1}y^{-1}x^{-2}y^2xy^2.$$

One can check that $\rho_4(\omega_1)\neq \rho_4(\omega_2)$. However, for $S_{t_0}$ the specialization of $\rho_4$ to $t_0=\frac{3+ \sqrt{5}}{2}$, the equality $S_{t_0}(\omega_1)=S_{t_0}(\omega_2)$ occurs. Theorem \ref{combind} proved that specializing $\rho_3$ at $t_0$ is faithful. Thus, the infidelity of $\rho_4$ at $t_0$ is truly a property of $B_4$, not a consequence of containing $B_3$.\\

Keeping inline with the previous discussions of discreteness, Squier's form easily gives the next result.

\begin{prop} The image of the specialization of the Burau representation at a quadratic algebraic integer is discrete. \end{prop}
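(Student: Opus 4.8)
The plan is to exploit Squier's sesquilinear form $J$ together with the observation that a quadratic algebraic integer $t_0$ generates a ring $\mathcal{O} = \mathbb{Z}[t_0]$ which is a finitely generated free $\mathbb{Z}$-module (of rank $2$), hence a discrete subring of $\mathbb{R}$ — or, in the complex-quadratic case, a lattice in $\mathbb{C}$. First I would record that every matrix in the image of the specialized Burau representation at $t_0$ has entries in $\mathcal{O}[t_0^{-1}]$, and since $t_0$ is an algebraic \emph{integer} with nonzero norm, $t_0^{-1}$ lies in the field $\mathbb{Q}(t_0)$; multiplying through by a bounded power of $t_0$ (the exponents of $t$ appearing in any fixed word are controlled, and Squier's relation $\rho(w)^\ast J \rho(w) = J$ pins down the determinant as a unit $\pm t_0^{k}$) shows the image sits inside $GL_2$ of a ring commensurable with $\mathcal{O}$. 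The key point is then that $\rho_3(B_3)$ specialized at $t_0$ preserves the Hermitian (or symmetric bilinear) form obtained by specializing $J$ at $t_0$, so the image lands in the unitary (resp. orthogonal) group of that form over $\mathcal{O}$.

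The main steps, in order: (i) specialize Squier's identity to get $S_{t_0}(w)^\ast J(t_0) S_{t_0}(w) = J(t_0)$ for all $w \in B_3$, where $\ast$ is now the conjugate-transpose coming from the Galois involution of $\mathbb{Q}(t_0)/\mathbb{Q}$ when $t_0 \notin \mathbb{Q}$ (and plain transpose when $J(t_0)$ is already real-symmetric); (ii) conclude $S_{t_0}(B_3) \subseteq U(J(t_0); \mathcal{O})$, an arithmetic-type subgroup of a real algebraic group obtained by restriction of scalars from $\mathbb{Q}(t_0)$ to $\mathbb{Q}$; (iii) invoke the standard fact that such a group of $\mathcal{O}$-points, embedded in the product of its real places, is discrete — and project to the archimedean place we care about, noting that the "other" place contributes a bounded factor precisely because the entries, being algebraic integers in a fixed rank-$2$ module, cannot have one conjugate tending to $0$ while staying integral. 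Concretely: a sequence $S_{t_0}(w_k) \to I$ would have all entries, as elements of the lattice $\mathcal{O} \subset \mathbb{R}$ (or $\mathbb{C}$), eventually equal to the corresponding entries of $I$, so the sequence is eventually constant.

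The one subtlety — and what I expect to be the main obstacle to write cleanly — is the appearance of $t_0^{-1}$: the entries live in $\mathbb{Z}[t_0, t_0^{-1}]$, not $\mathbb{Z}[t_0]$, and $\mathbb{Z}[t_0,t_0^{-1}]$ need \emph{not} be discrete in $\mathbb{R}$ (e.g. $t_0$ a quadratic unit like $\tfrac{1+\sqrt5}{2}$). This is exactly why one must use Squier's form rather than naive coordinates: the form-preservation forces $\det S_{t_0}(w) = \pm t_0^{\,e(w)}$, and pairing each matrix entry against the relation $S_{t_0}(w)^{-1} = \pm t_0^{-e(w)} \operatorname{adj}(S_{t_0}(w))$ shows that if $S_{t_0}(w_k)\to I$ then the exponents $e(w_k)$ are eventually $0$, after which all entries genuinely lie in $\mathcal{O} = \mathbb{Z}[t_0]$, which \emph{is} discrete as a $\mathbb{Z}$-lattice of rank $2$. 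Once that reduction is in place the discreteness is immediate. I would spend most of the written proof making the "exponents are eventually bounded/zero" step precise, since everything else is a citation to the standard discreteness of lattices and of $\mathcal{O}$-points of $\mathbb{Q}$-algebraic groups (e.g. via \cite{RAT}).
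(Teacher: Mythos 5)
Your outline follows the same strategy as the paper (specialize Squier's identity, bring in the Galois conjugation on $\mathbb{Q}(t_0)$, and finish with an integrality argument), but the final step as you state it contains a genuine error. You assert that $\mathcal{O}=\mathbb{Z}[t_0]$ is ``a discrete subring of $\mathbb{R}$'' and conclude that a sequence of matrices with entries in $\mathcal{O}$ converging to $I$ must be eventually constant. For a real quadratic integer $t_0$ the ring $\mathbb{Z}[t_0]$ is \emph{dense} in $\mathbb{R}$ (it is a rank-$2$ lattice only when embedded in $\mathbb{R}^2$ via both archimedean places), so convergence of the entries at the single place you care about gives nothing by itself; e.g.\ $(\sqrt{2}-1)^n\to 0$ inside $\mathbb{Z}[\sqrt{2}]$, which also refutes your claim that integral elements of a rank-$2$ module ``cannot have one conjugate tending to $0$.'' The indispensable step — and the actual content of the paper's proof — is to use the specialized Squier relation $(A_k^{\sigma})^{T}=JA_k^{-1}J^{-1}$ to deduce that if $A_k\to I$ then the Galois-conjugate matrices $A_k^{\sigma}$ also converge to $I$. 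Only once \emph{both} embeddings of every entry are bounded does the lattice/finiteness argument apply: a quadratic integer with both conjugates bounded lies in a finite set, so the entries are eventually constant. You gesture at this when you say ``the other place contributes a bounded factor,'' but the justification you give for that boundedness is the false integrality claim rather than the Squier identity, so as written the proof does not close.

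A secondary remark: you identify the presence of $t_0^{-1}$ as the main obstacle and propose a determinant-exponent argument to remove it. In the regime where this proposition actually functions — the paper takes $\sigma(t_0)=t_0^{-1}$, i.e.\ $t_0$ a norm-one unit such as $\frac{3+\sqrt{5}}{2}$ — this is a non-issue, since then $t_0^{-1}=\sigma(t_0)\in\mathbb{Z}[t_0]$ and the entries are already algebraic integers. (For a non-unit quadratic integer such as $\sqrt{2}$ your worry is legitimate, but there the Galois involution no longer matches the substitution $t\mapsto t^{-1}$ in Squier's form and the whole method, the paper's included, breaks down; this is a hypothesis the paper leaves implicit.) So the effort you allocate to the exponent bookkeeping is spent on the wrong subtlety, while the step that genuinely needs an argument — forcing convergence at the conjugate place — is the one left unproved.
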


\begin{proof}

Let $\alpha$ be a quadratic algebraic integer and $\sigma$ be the generator of the Galois group of $\mathbb{Q}(\alpha)$. The map $\sigma$ is determined by $\sigma(\alpha)=\alpha^{-1}$.  Fix arbitrary $n$ and consider the Burau representation on $B_n$ specialized at $\alpha$, and $J$  the associated Squier's form.
Let $\{A_k\}$ be a sequence of matrices in the image of this specialization and assume that $\{A_k\}$ converges to the $Id$. Each $A_k$ has entries in $\mathbb{Q}(\alpha)$, so the defining relation of Squier's form $A_k^*JA_k=J$ becomes $(A_k^{\sigma})^T=JA_k^{-1}J^{-1}$. So if $A_k\to Id$ then so does $A_k^{\sigma}$. Since $\sigma$ is \textit{the only} field automorphism, then there are only finitely many options for such entries $(A_k)_{ij}$, which means $A_k$ has to eventually be constant.
\end{proof}

\begin{cor}
The specialization of the Burau representation at $\frac{3+ \sqrt{5}}{2}$ is discrete.
\end{cor}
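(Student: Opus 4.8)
The plan is to deduce this corollary as an immediate special case of the preceding Proposition. First I would observe that $\frac{3+\sqrt5}{2}$ is a root of the monic integer polynomial $X^2 - 3X + 1$, hence is an algebraic integer of degree $2$ over $\Q$. Its Galois conjugate is the other root $\frac{3-\sqrt5}{2}$, and since the product of the two roots is the constant term $1$, the conjugate equals $\left(\frac{3+\sqrt5}{2}\right)^{-1}$. Therefore $\alpha = \frac{3+\sqrt5}{2}$ satisfies exactly the hypotheses of Proposition 4.4: it is a quadratic algebraic integer whose nontrivial field automorphism sends $\alpha \mapsto \alpha^{-1}$.

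Having checked the hypotheses, the conclusion follows directly: Proposition 4.4 asserts that the image of the specialization of the Burau representation (on $B_n$ for any $n$, in particular $n=3$, which is the case relevant to this paper) at such an $\alpha$ is discrete. Applying it with $\alpha = \frac{3+\sqrt5}{2}$ gives the statement.

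There is essentially no obstacle here — the only thing to be careful about is confirming the two small arithmetic facts (that $\frac{3+\sqrt5}{2}$ is an algebraic integer, i.e. that $X^2-3X+1$ is monic with integer coefficients, and that its conjugate is its inverse rather than its negative or something else), both of which are routine. One could alternatively note the consistency with Theorem~\ref{combind}: since $\frac{3+\sqrt5}{2}$ lies on the boundary of the interval $\left(\frac{3-\sqrt5}{2},\frac{3+\sqrt5}{2}\right)$ and falls into case $(2)$ of that theorem, it is already known to give a discrete (indeed faithful) specialization of $\rho_3$; but the point of stating the corollary after Proposition~4.4 is to illustrate the Squier-form argument, so I would phrase the proof purely as an application of that Proposition.

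\begin{proof}
The number $\alpha = \frac{3+\sqrt5}{2}$ is a root of $X^2 - 3X + 1$, a monic polynomial with integer coefficients, so $\alpha$ is a quadratic algebraic integer. The other root of this polynomial is $\frac{3-\sqrt5}{2}$, and since the product of the two roots equals the constant term $1$, the Galois conjugate of $\alpha$ is $\alpha^{-1}$. Thus $\alpha$ satisfies the hypotheses of Proposition~4.4, and the image of the specialization of the Burau representation at $\alpha$ is discrete.
\end{proof}
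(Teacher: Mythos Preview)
Your proposal is correct and matches the paper's intent: the paper states this corollary without any proof, immediately following Proposition~4.4, so it is meant as a direct application of that proposition. Your verification that $\frac{3+\sqrt5}{2}$ is a quadratic algebraic integer with conjugate equal to its inverse supplies exactly the details the paper leaves implicit.
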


The number $\frac{3+ \sqrt{5}}{2}$ is particularly interesting as $\rho_3$ specialized at $\frac{3+ \sqrt{5}}{2}$ is both discrete and faithful, while specializing $\rho_4$ at $\frac{3+ \sqrt{5}}{2}$ is discrete and yet unfaithful.

\begin{center} Acknowledgments \end{center} This paper was funded in part by the NSF grant DMS 1463740. I would like to thank my graduate advisor Darren Long for significant insight and guidance on this project. I would also like to thank Steve Trettel for helpful conversations.

\bibliography{bib1}{}
\bibliographystyle{plain}

\end{document}